\newtheorem{thm}{Theorem}[section]
\newtheorem{cor}[thm]{Corollary}
\newtheorem{lem}[thm]{Lemma}
\newtheorem{prop}[thm]{Proposition}
\theoremstyle{definition}
\newtheorem{defn}[thm]{Definition}
\theoremstyle{remark}
\newtheorem{rem}[thm]{Remark}
\newtheorem{exa}[thm]{Example}
\numberwithin{equation}{section}
\DeclareMathOperator{\GL}{{GL}}
\DeclareMathOperator{\Deg}{{deg}}
\DeclareMathOperator{\Div}{{div}}
\DeclareMathOperator{\DDiv}{{Div}}
\DeclareMathOperator{\DDeg}{{deg}}
\DeclareMathOperator{\Eq}{{eq}}
\DeclareMathOperator{\ind}{Ind}
\begin{document}

\title{Group Actions on Riemann-Roch Space}
\author{Angel Carocca}
\author{Daniela V\'asquez Latorre}
\address{Departamento de Matem\'aticas, Universidad de La Frontera, Casilla 54-D, Temuco, Chile}
\email{angel.carocca@ufrontera.cl}
\address{Departamento de Matem\'aticas, Universidad del Valle, 
Ciudad Universitaria Mel\'endez, Calle 13 \# 100-00,  C\'odigo Postal 760032, Cali, Colombia}

\email{daniela.vasquez@correounivalle.edu.co}
\subjclass[2010]{14H55, 14H51}
\keywords{Riemann surfaces, Divisors, Riemann-Roch Space}
\thanks{The  first author was  partially supported by Anillo ACT 1415 PIA-CONICYT.}

\begin{abstract}
Let $ \; G \; $ be a group acting on a compact Riemann surface $ \; {\mathcal X} \; $ and  $ \; D \; $ be a $ \; G$-invariant divisor on $\; {\mathcal X}. \; $ The action of $ \; G \; $ on $ \; {\mathcal X} \; $ induces a linear representation $ \; L_G(D) \; $ of $ \; G \; $ on the Riemann-Roch space associated to $ \; D.$
\vspace{1mm}\\
In this paper we give some results on the decomposition of $ \; L_G(D) \; $ as sum of complex irreducible representations of $ \; G, \; $ for
$ \; D \; $ an effective non-special $\; G$-invariant divisor. In particular, we give explicit formulae for the multiplicity of each complex irreducible factor in $ \; L_G(D) \; $. We work out some examples on well known families of curves.
\end{abstract}
\maketitle
\section{Introduction}

Let $ \; {\mathcal X} \; $  be a compact Riemann surface, and let $ \; G \; $  be a  group of automorphisms of $ \; {\mathcal X}. \; $

If $ \; D \; $ is a divisor on $ \; {\mathcal  X} \; $ which is stable under the action of $ \; G, \; $ then $ \; G \; $  acts on the Riemann-Roch space $ \; {\mathcal L}(D) \; $ associated to $ \; D. \; $

The problem of determining the decomposition of the induced linear representation $ \; L_G(D) \; $ of $ \; G \; $ on $ \; {\mathcal L}(D) \; $ as sum  of irreducible  representations of $ \; G \; $  was originally considered by A. Hurwitz \cite{h} in the case $ \; D \; $ a canonical divisor and $ \; G \; $ a cyclic group. C. Chevalley and A. Weil \cite{cw}, extended this result to any finite group and $ \; D \; $ a canonical divisor.
\vspace{1mm}\\
Since then, many authors have worked on this problem  for certain types of divisors. See for instance Borne \cite{Bo}, Ellingsrud and L$\phi$nsted \cite{E-L}, Kani \cite{Ka}, K\"och, \cite{Ko}, and Nakajima \cite{Na}. In the case where $ \; D \; $ is a non-special divisor, an equivariant Riemann-Roch formula was given for the character of $ \; L_G(D), \; $ 
see for instance \cite{Bo}. Also in this case, Joyner and Ksir \cite{J-K} gave explicit formula for the multiplicity of each rational irreducible factor when $ \; L_G(D) \; $ is a rational representation of $ \; G$.
\vspace{2mm}\\
In this paper we extend the Joyner-Ksir's results to the general case: that is, when $ \; L_G(D) \; $ is a complex representation of $ \; G $. We give explicit formulae for the multiplicity of each complex irreducible factor in the decomposition of $ \; L_G(D) \; $ as a sum of complex  irreducible representations of $ \; G. \; $ 
To illustrate this decomposition we give some examples of group actions on Riemann-Roch spaces for divisors on  well known families of curves.

\section{Group actions on Riemann surfaces}
First we  recall some facts   about actions of a finite group $ \; G \; $ on a compact Riemann surface $ \; {\mathcal X} \; $ of genus $ \; \mathtt{g}. \; $
The quotient space $\; \mathcal{X}_{G}:= \mathcal{X}/G\;$ is a Riemann surface and the quotient projection
$\Pi: \;\mathcal{X} \rightarrow \mathcal{X}_{G}\;$ is a branched cover. This cover may be partially characterized
by a vector of numbers $\;(\gamma ; m_{1}, \cdots , m_{r})\;$ where $\;\gamma \; $ is the genus of $\; \mathcal{X}_{G}, $ the integer  $ \; 0 \leq r \leq 2 \mathtt{g}  + 2 \;$ is the number of branch points $\{ Q_1 , \ldots , Q_r\} \subset \mathcal{X}_{G}$ of the cover, and for each $ \; 1 \leq j \leq r $ the integer  $ \; m_{j} > 1 \; $ is the order 
of the cyclic subgroup $ \; G_j  = \langle c_j \rangle \; $ of $ \; G \; $ stabilizing a  point  $ \; P_j \in \mathcal{X}$ with $\Pi(P_j)=Q_j$. We call $(\gamma ; m_{1}, \cdots , m_{r})$ the \textit{branching data} of $G$ on $ \; \mathcal{X}$. These numbers satisfy the Riemann-Hurwitz equation
\begin{equation}\label{rh}
\frac{2(\mathtt{g} -1)}{\vert G\vert }=2(\gamma-1)+\sum_{j=1}^{r}\left(1-\frac{1}{m_{j}}\right) .
\end{equation}
\noindent A $(2\gamma+r)-$tuple $\left(a_{1},\cdots, a_{\gamma},b_{1},\cdots,b_{\gamma}, c_{1},\cdots,c_{r}\right)$ of elements of $ \; G \; $ is called a \textit{generating vector of type } $(\gamma;m_{1},\cdots,m_{r})$  if
$$\label{gvector}
 G = \left\langle a_{1},\cdots, a_{\gamma},b_{1},\cdots,b_{\gamma}, c_{1},\cdots,c_{r} \; \; / \;  \prod_{i=1}^{\gamma}[a_{i},b_{i}]\prod_{j=1}^{r}c_{j} = 1 \; ,  \; \vert c_{j}\vert  = m_{j} \;  \mbox {for} \; j =1,...,r \; , \; {\mathcal R}  \right\rangle\\
$$
where $[a_i,b_i]=a_ib_ia_i^{-1}b_i^{-1} \; $ and $ \; {\mathcal R} \; $ is a set of appropriate relations on  $ \; \{ a_{1},\cdots, a_{\gamma},b_{1},\cdots,b_{\gamma}, c_{1},\cdots,c_{r}\}.$
\vspace{2mm}\\
Riemann's Existence Theorem then tells us that (see \cite{Brou})

\begin{thm}
The group $G$ acts on a compact Riemann surface $\; \mathcal{X} \; $ of genus $\; \mathtt{g}$ with branching data $(\gamma;m_{1},\cdots,m_{r})$ if and only if $G$ has a generating vector of type $(\gamma;m_{1},\cdots,m_{r})$ satisfying the Riemann-Hurwitz formula \eqref{rh}.\\
\end{thm}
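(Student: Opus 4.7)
The plan is to prove both directions via the Galois correspondence between branched covers of $\mathcal{X}_G$ and surjective homomorphisms from a surface group onto $G$.

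For the forward direction, I would first restrict the quotient map $\Pi:\mathcal{X}\to\mathcal{X}_G$ to the complement of the branch locus: removing the branch points $Q_1,\ldots,Q_r$ and their preimages yields an unramified Galois covering with deck group $G$. Next I would invoke the standard presentation of $\pi_1(\mathcal{X}_G\setminus\{Q_1,\ldots,Q_r\})$ in terms of hyperbolic generators $\alpha_i,\beta_i$ (for $i=1,\ldots,\gamma$) together with simple loops $\delta_j$ around each puncture $Q_j$, subject only to the single relation $\prod_{i=1}^{\gamma}[\alpha_i,\beta_i]\prod_{j=1}^{r}\delta_j=1$. Pushing these forward along the classifying surjection $\theta:\pi_1\to G$ produces the desired generating vector by setting $a_i=\theta(\alpha_i)$, $b_i=\theta(\beta_i)$, $c_j=\theta(\delta_j)$; the requirement $|c_j|=m_j$ follows because the ramification index at any point of $\mathcal{X}$ above $Q_j$ equals the order of the local monodromy element, which is exactly $m_j$ by hypothesis. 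The Riemann-Hurwitz equation (\ref{rh}) is automatic since $\Pi$ is already a branched cover of genus $\mathtt{g}$ over genus $\gamma$ with the stated ramification.

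For the converse, I would reverse the construction. Starting from a generating vector of type $(\gamma;m_1,\ldots,m_r)$, I would choose an arbitrary compact Riemann surface $\mathcal{Y}$ of genus $\gamma$ together with $r$ distinguished points $Q_1,\ldots,Q_r$, and define a surjective homomorphism $\theta:\pi_1(\mathcal{Y}\setminus\{Q_1,\ldots,Q_r\})\to G$ by the prescribed assignments on standard generators; the relations in the fundamental group are respected exactly because of the defining equation of a generating vector. Covering space theory then furnishes a connected unramified Galois cover with deck group $G$. I would extend this cover across each puncture by gluing in a disc with local monodromy of order $m_j$, which is consistent precisely because $|c_j|=m_j$. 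The resulting compact topological surface carries a unique compatible Riemann surface structure making the extended projection holomorphic, yielding the desired $\mathcal{X}$ with $G$-action and branching data $(\gamma;m_1,\ldots,m_r)$; the genus of $\mathcal{X}$ is then forced to equal $\mathtt{g}$ by the Riemann-Hurwitz formula \eqref{rh}.

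The main obstacle is not combinatorial but analytic: one must invoke the classical fact that every finite topological cover of a Riemann surface admits a canonical compatible complex structure, and that an unramified analytic cover over a punctured neighborhood of $Q_j$ extends across the puncture to a cyclically ramified cover of the appropriate order. Once these facts are in hand (standard in the literature, e.g.\ Miranda or Farkas–Kra), the remainder is careful bookkeeping of generators, relations, and ramification indices, and the full statement reduces to the references cited.
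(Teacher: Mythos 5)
Your sketch is correct: it is the standard proof of Riemann's Existence Theorem for group actions, arguing in the forward direction via the presentation of $\pi_1(\mathcal{X}_G\setminus\{Q_1,\ldots,Q_r\})$ and the surjection onto the deck group, and conversely by building the unramified Galois cover from the kernel of the homomorphism determined by the generating vector, filling in the punctures with cyclic branching of order $m_j$, and transporting the complex structure. The paper itself gives no proof of this statement; it simply cites the literature (Broughton), and your argument is precisely the classical one underlying that reference, with the analytic inputs (existence and uniqueness of the compatible complex structure on the cover, removable-singularity extension across punctures) correctly identified as the only non-combinatorial steps.
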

\noindent
In order to formulate the results, we use the following notation:
Let $ \; G \; $ be a  group acting on a compact Riemann surface $ \; {\mathcal X} \; $ with branching data $\; (\gamma;m_{1},\cdots,m_{r})\; $
and generating vector $ \; \left(a_{1},\cdots, a_{\gamma},b_{1},\cdots,b_{\gamma}, c_{1},\cdots,c_{r}\right). \; $
For each $ \; P  \in  {\mathcal X} \; $  let $ \; G_P  = \langle c_P \rangle \; $ be the stabilizer of $ \; P \; $ in $ \; G, \; $ of order $ \; m_P \geq 1. \; $\\
The subgroup  $ \; G_P \; $  acts on the cotangent space $ \; {\mathcal X}(P) \; $ at $ \; P \; $ by a  $ \; {\mathbb C}$-character $ \; \omega_{P}. \; $ This is the ramification character of $ \; {\mathcal X} \; $ at $ \; P. \; $ 
Since  $ \; G_P \; $ is cyclic we have that $ \; \omega_{P} \; $ is a primitive $ \; m_P^{th}$-root of the unity. Particularly,  for $ \; P_j \; $ a branch point we will write $ \; G_{P_j} = G_j \; $ with order $ \; m_{P_j} = m_j \; $ and $ \; \omega_P = \omega_j \; $ the corresponding ramification character. 
\vspace{2mm}\\
For $ \; V \; $ a  complex irreducible representation of $ \; G, \; $ let $ \; N^V_{P\: k} \; $ be the number of times that  $ \; \omega_{P}^{k} \; $ is  an eigenvalue of $ \; V(c_P).$

\subsection{Rational and Analytical Representations}
The action of $ \; G \; $ on $\; \mathcal{X} \; $ induces two linear representations of $ \; G, \; $ the \textit{rational representation}
$ \; \rho_r : G \rightarrow \GL(H_1(\mathcal{X}, \mathbb{Z})\otimes {\mathbb Q}) \; $ and the \textit{analytic representation} $ \; \rho_a : G \rightarrow \GL(H^{1,0}(\mathcal{X}, \mathbb{C})). \; $  Both are related by
$$\rho_r \otimes {\mathbb C} \cong \rho_a \oplus \rho_a^{\ast} $$
where $ \; \rho_a^{\ast} \; $ is the complex conjugate of $ \; {\rho_a}.$
\vspace{2mm}\\
The multiplicity of each complex irreducible factor in
the decomposition of $ \; \rho_a \; $ as sum of complex irreducible  representations of $ \; G \; $ was given by Chevalley-Weil in \cite{cw}, as follows.

\begin{thm}\label{CH-W}
Let $ \; V \; $ be a non-trivial complex irreducible  representation of $ \; G. \; $ 
Then the multiplicity $ \; a(V) \; $ of  $ \; V \; $  in
the decomposition of $ \; \rho_a \; $ as sum of complex  irreducible  representations of $ \; G \; $ is given by
$$a(V) = \dim_{\mathbb C}(V)(\gamma - 1) + \displaystyle{\sum_{j=1}^{r}\sum_{k = 1}^{m_j -1}N_{j\:k}^{V}\left<-\frac{k}{m_j} \right>}  $$
where $ \; \displaystyle{\left< q \right> = q -[q]} \; $ is the fractional part of the rational number $ \; q. \;  \; $ 

Furthermore, for the trivial representation $ \; V_0 \; $ we have $ \; a(V_0) = \gamma. $
\end{thm}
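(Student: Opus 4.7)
The plan is to compute the character $\chi_{\rho_a}(g)$ for every $g\in G$ and then recover the multiplicity $a(V)$ from the Frobenius inner product $\langle \chi_{\rho_a},\chi_V\rangle = \frac{1}{|G|}\sum_{g\in G}\chi_{\rho_a}(g)\overline{\chi_V(g)}$. At the identity, $\chi_{\rho_a}(1)=\mathtt{g}$. For $g\neq 1$ I would invoke the Eichler (holomorphic Lefschetz) trace formula applied to $\Omega^{1}_{\mathcal X}$, which expresses $\chi_{\rho_a}(g)$ as $1$ plus a finite sum over $\mathrm{Fix}(g)\subset \mathcal X$, with each fixed point $P$ contributing an explicit rational function of the cotangent eigenvalue $\omega_P(g)$.

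Next I would exchange the order of summation: the double sum over pairs $(g,P)$ with $gP=P$ is rewritten as a sum over fixed points. Only points above the branch locus contribute, and by orbit--stabilizer there are $|G|/m_j$ points above each $Q_j$. At any such point a non-trivial stabilizer element is conjugate to some $c_j^{k}$ with $1\le k\le m_j-1$, acting on the cotangent space by $\omega_j^{k}$. Writing $\chi_V(c_j^{k})=\sum_{s=0}^{m_j-1}N^{V}_{j,s}\,\omega_j^{sk}$, which is precisely the definition of $N^{V}_{j,s}$, the branch contribution to $a(V)$ collapses to a double sum over $(j,s)$ weighted by $N^{V}_{j,s}$, whose inner factor is a cyclotomic sum in $k$.

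The core technical step --- and the step I expect to be the chief obstacle --- is the cyclotomic identity expressing this inner sum in terms of the fractional part $\langle -s/m_j\rangle$ plus an additive constant depending only on $m_j$; this is established by a residue or partial-fraction argument applied to a suitable rational function with poles at the $m_j$-th roots of unity. Substituting the identity and then combining the leftover constants (the ``$1$'' in the Eichler formula, the contribution $\chi_{\rho_a}(1)=\mathtt g$, and the averaged residues) by means of the Riemann--Hurwitz relation \eqref{rh} produces exactly the coefficient $\dim_{\mathbb C}V\,(\gamma-1)$ in front of the branch sum, proving the formula for every non-trivial $V$. For the trivial representation $V_0$ the eigenvalue index $s=0$ (which is excluded from the stated formula) contributes one additional copy, and the same bookkeeping yields $a(V_0)=\gamma$, in agreement with the well known identification $H^0(\mathcal X,\Omega^{1}_{\mathcal X})^G \cong H^0(\mathcal X_G,\Omega^{1}_{\mathcal X_G})$.
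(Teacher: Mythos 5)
The paper does not prove Theorem \ref{CH-W} at all: it is the classical Chevalley--Weil formula, quoted from \cite{cw} and used only as an input to Corollary \ref{corolario}, Corollary \ref{rema} and Theorem \ref{teo}, so there is no internal argument to compare yours against; what you are offering is an independent derivation, and the route you chose (Eichler/holomorphic Lefschetz trace formula for $\chi_{\rho_a}(g)$ with $g\neq 1$, Frobenius inner product, interchange of the sum over pairs $(g,P)$, the orbit--stabilizer count of the $|G|/m_j$ points over each $Q_j$, expansion $\chi_V(c_j^k)=\sum_s N^V_{j,s}\omega_j^{sk}$, then a cyclotomic evaluation combined with Riemann--Hurwitz \eqref{rh}) is the standard modern proof, and the bookkeeping does go through. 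As written, however, it is a plan rather than a proof, and the deferred steps are exactly the delicate ones. First, the cyclotomic identity must actually be stated and proved; the correct form is $\sum_{k=1}^{m-1}\zeta^{kt}/(1-\zeta^{k})=m\langle t/m\rangle-\tfrac{m+1}{2}$ for $\zeta$ a primitive $m$-th root of unity and $t\not\equiv 0\pmod m$, with the value $\tfrac{m-1}{2}$ when $t\equiv 0$, and the $t\equiv 0$ case must be tracked separately in the sum over eigenvalue indices. Second, the sign conventions --- whether the paper's cotangent character $\omega_P$ is the local multiplier of $g$ at $P$ or its inverse, equivalently whether the action on differentials is pullback by $g$ or by $g^{-1}$, together with the matching form of the Eichler local term --- are precisely what decide whether $\langle -k/m_j\rangle$ or $\langle k/m_j\rangle$ appears in the final formula; you never fix them, and this is where such derivations most often go wrong. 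Third, your explanation of $a(V_0)=\gamma$ is misattributed: the index $s=0$ contributes nothing to the fractional-part sum in any case; the extra copy (relative to the value $\gamma-1$ that the non-trivial formula would formally give) comes from the constant term $1$ in the trace formula, i.e.\ from $\sum_{g\neq 1}\overline{\chi_{V_0}(g)}=|G|-1$ instead of $-\dim_{\mathbb C}V$. The clean argument for the trivial character is the one you mention last, $a(V_0)=\dim H^0(\mathcal X,\Omega^1)^G=\dim H^0(\mathcal X_G,\Omega^1)=\gamma$, and you could simply use it. In short: sound and standard strategy, correct target, but the identity, the convention check, and the final Riemann--Hurwitz collection of constants still have to be carried out before this counts as a proof.
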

For the rational representation $ \; \rho_r \; $ similar formulae was given by Broughton \cite{br} as follows.
\begin{thm} 
Let $ \; V \; $ be a non-trivial complex irreducible  representation of $ \; G. \; $ 
Then the multiplicity $ \; r(V) \; $ of  $ \; V \; $  in
the decomposition of $ \; \rho_r \otimes {\mathbb C} \; $ as sum of complex irreducible  representations is given by
$$r(V) = 2 (\gamma - 1)\dim_{\mathbb C}(V) +   \sum_{j=1}^{r}(\dim_{\mathbb C}(V) - \dim_{\mathbb C}(V^{G_j}))$$
where  $ \; \; V^{G_j} \; $ denotes the fixed subspace of $ \; V \; $ under the action of $ \; G_j. \; $\\
\end{thm}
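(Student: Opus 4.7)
The plan is to derive the formula from the Chevalley--Weil theorem for $a(V)$ (Theorem \ref{CH-W}) by exploiting the relation $\rho_r \otimes {\mathbb C} \cong \rho_a \oplus \rho_a^{\ast}$ stated just above. This relation immediately gives
$$ r(V) = a(V) + a(V^{\ast}), $$
since the multiplicity of $V$ in $\rho_a^{\ast}$ equals the multiplicity of $V^{\ast}$ in $\rho_a$. So the task reduces to computing $a(V) + a(V^{\ast})$ and recognizing the answer as $2(\gamma-1)\dim_{\mathbb C}(V) + \sum_j (\dim_{\mathbb C}(V) - \dim_{\mathbb C}(V^{G_j}))$.

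The first step is to relate the eigenvalue counts for $V^{\ast}$ to those for $V$. Since $V^{\ast}(c_j)$ has eigenvalues conjugate to those of $V(c_j)$, and $\omega_j$ is a primitive $m_j$-th root of unity, one has $\overline{\omega_j^{k}} = \omega_j^{m_j - k}$, so
$$ N^{V^{\ast}}_{j\,k} = N^{V}_{j\,m_j - k} \qquad (1 \leq k \leq m_j - 1). $$
Next, using Chevalley--Weil and the explicit value $\left\langle -k/m_j\right\rangle = (m_j - k)/m_j$ for $1 \leq k \leq m_j-1$, I would write
$$ a(V) + a(V^{\ast}) = 2(\gamma-1)\dim_{\mathbb C}(V) + \sum_{j=1}^{r}\sum_{k=1}^{m_j-1}\bigl(N^V_{j\,k} + N^V_{j\,m_j-k}\bigr)\frac{m_j-k}{m_j}. $$

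The key simplification is a symmetry argument in the inner sum: reindexing the second term by $k \mapsto m_j - k$, the bracket becomes $N^V_{j\,k}\bigl((m_j-k)/m_j + k/m_j\bigr) = N^V_{j\,k}$, so the inner sum collapses to $\sum_{k=1}^{m_j-1} N^V_{j\,k}$. This is precisely the total number of non-trivial eigenvalues of $V(c_j)$, i.e.\ $\dim_{\mathbb C}(V) - N^V_{j\,0}$, and since $G_j = \langle c_j\rangle$ is cyclic one has $N^V_{j\,0} = \dim_{\mathbb C}(V^{G_j})$. Plugging this back yields exactly the claimed formula.

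The only potential obstacle is the slight bookkeeping issue that Chevalley--Weil is stated for non-trivial $V$, so I would separately verify the trivial case (where the formula just says $r(V_0) = 2(\gamma-1) + \sum_j 0 \cdot $ terms cancel, giving $r(V_0)$ that matches $a(V_0) + a(V_0^{\ast}) = 2\gamma$ after adjusting by the discrepancy in the trivial component of $H_1$ versus $H^{1,0}$). Other than checking this edge case, the argument is a clean symmetrization of Chevalley--Weil.
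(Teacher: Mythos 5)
Your argument is correct, and it is genuinely different from what the paper does: the paper gives no proof at all for this statement, simply quoting it from Broughton, whose original derivation goes through a trace/Lefschetz fixed-point computation on $H_1$. You instead deduce it from the two ingredients the paper states immediately beforehand, namely the Chevalley--Weil formula and the relation $\rho_r\otimes{\mathbb C}\cong\rho_a\oplus\rho_a^{\ast}$, via $r(V)=a(V)+a(V^{\ast})$. All the steps check out: the multiplicity of $V$ in $\rho_a^{\ast}$ is indeed that of $V^{\ast}$ in $\rho_a$ (and $V^{\ast}$ is non-trivial when $V$ is, so Chevalley--Weil applies to both); $N^{V^{\ast}}_{j\,k}=N^{V}_{j\,m_j-k}$ since the eigenvalues of $V^{\ast}(c_j)$ are the inverses of those of $V(c_j)$; $\left\langle -k/m_j\right\rangle=(m_j-k)/m_j$ for $1\leq k\leq m_j-1$; the reindexing $k\mapsto m_j-k$ collapses the double count to $\sum_{k=1}^{m_j-1}N^{V}_{j\,k}=\dim_{\mathbb C}(V)-N^{V}_{j\,0}$; and $N^{V}_{j\,0}=\dim_{\mathbb C}(V^{G_j})$ because $c_j$ generates the cyclic group $G_j$, so the eigenvalue-$1$ eigenspace of $V(c_j)$ is exactly $V^{G_j}$. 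Your approach has the merit of making the result self-contained within the paper's framework (it is essentially the same bookkeeping that underlies the paper's Corollary~\ref{corolario}), whereas the cited proof works directly with the homology representation. One small clean-up: the worry about the trivial representation is moot, since the theorem explicitly excludes $V_0$; and rightly so, because the formula genuinely fails there (it would give $2\gamma-2$, while the multiplicity of $V_0$ in $\rho_r\otimes{\mathbb C}$ is $2\gamma=a(V_0)+a^{\ast}(V_0)$), so you should simply drop that paragraph rather than try to ``verify'' it.
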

With these results we obtain the following
\begin{cor}\label{corolario}
If $ \; V \; $ is a non-trivial absolutely  irreducible  representation of  $ \; G, \; $ then 
$$a(V) = a^{\ast}(V) = (\gamma - 1)\dim_{\mathbb Q}(V) + \displaystyle{\frac{1}{2}\left( \sum_{j=1}^{r}\dim_{\mathbb Q}(V) - \dim_{\mathbb Q}(V^{G_j})\right)}  $$
where $ \; a^{\ast}(V) \; $ 
is  the multiplicity of  $ \; V \; $  in
the decomposition of $ \; \rho_a^{\ast}.\; $ \\
\end{cor}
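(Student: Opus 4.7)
The plan is to derive the corollary by combining Broughton's formula for $r(V)$ with the defining relation $\rho_r \otimes \mathbb{C} \cong \rho_a \oplus \rho_a^{\ast}$ stated earlier in the subsection.

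The first step is to translate absolute irreducibility into two concrete consequences. Since $V$ admits a $\mathbb{Q}$-structure $V_{\mathbb{Q}}$ with $V_{\mathbb{Q}} \otimes_{\mathbb{Q}} \mathbb{C} \cong V$, complex conjugation fixes $V$, so $V \cong V^{\ast}$ as complex representations. Moreover, $\dim_{\mathbb{C}}(V) = \dim_{\mathbb{Q}}(V_{\mathbb{Q}}) = \dim_{\mathbb{Q}}(V)$, and the same holds for the $G_j$-fixed subspaces (which are themselves defined over $\mathbb{Q}$ because the $G_j$-action is), giving $\dim_{\mathbb{C}}(V^{G_j}) = \dim_{\mathbb{Q}}(V^{G_j})$.

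The second step exploits $V \cong V^{\ast}$. Taking multiplicities of $V$ on both sides of $\rho_r \otimes \mathbb{C} \cong \rho_a \oplus \rho_a^{\ast}$, the left side contributes $r(V)$ while the right side contributes $a(V) + a^{\ast}(V)$. Since the multiplicity of $V$ in $\rho_a^{\ast}$ is the multiplicity of $V^{\ast} \cong V$ in $\rho_a$, we have $a(V) = a^{\ast}(V)$, and therefore
$$r(V) = 2 a(V).$$

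For the final step, I would plug Broughton's expression for $r(V)$ into $a(V) = r(V)/2$ and then rewrite every occurrence of $\dim_{\mathbb{C}}$ as $\dim_{\mathbb{Q}}$ using the equalities from the first step. This produces exactly the stated formula. The only substantive issue is the first step: the identification of absolute irreducibility with the existence of a $\mathbb{Q}$-structure (equivalently, $V$ being its own Galois orbit over $\mathbb{Q}$), which is what guarantees both $V \cong V^{\ast}$ and the dimension identities. Everything else is a one-line manipulation of the two previous theorems.
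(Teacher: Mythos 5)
Your proposal is correct and follows essentially the same route the paper intends: the corollary is obtained by combining Broughton's formula for $r(V)$ with the relation $\rho_r \otimes \mathbb{C} \cong \rho_a \oplus \rho_a^{\ast}$, using that an absolutely irreducible representation defined over $\mathbb{Q}$ is self-dual so that $a(V) = a^{\ast}(V) = \tfrac{1}{2}r(V)$, and then identifying $\dim_{\mathbb C}$ with $\dim_{\mathbb Q}$ for $V$ and $V^{G_j}$. Your explicit handling of the $\mathbb{Q}$-structure is a welcome clarification of what the paper leaves implicit.
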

\subsection{The Ramification Module}
The following definition was introduced in \cite{Ka} (also see \cite{Bo}, \cite{J-K} and \cite{Na}).
 The \textit{ramification module} for the cover $\Pi: \;\mathcal{X} \rightarrow \mathcal{X}_{G}\;$  with branching data $\; (\gamma;m_{1},\cdots,m_{r})\; $
 and generating vector $ \; \left(a_{1},\cdots, a_{\gamma},b_{1},\cdots,b_{\gamma}, c_{1},\cdots,c_{r}\right) \; $ is defined by
$$
\Gamma_G =    \displaystyle{\sum_{j=1}^{r}\ind_{G_j}^G\left(\sum_{\alpha=1}^{m_j-1}\alpha\omega_{j}^{\alpha} \right)} .$$
The following result was proved by Kani \cite{Ka} and Nakajima  \cite{Na}.
\begin{thm} Let $ \; G \; $ be a group acting on $ \; {\mathcal X} \; $  and
$ \; \Gamma_G \; $ the associated ramification module. Then there is a unique $ \; G$-module $ \; \tilde{\Gamma}_G \; $ such that
$$ \Gamma_G = \tilde{\Gamma}_G^{\vert G\vert}.$$
\\
\end{thm}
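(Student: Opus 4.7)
The plan is to deduce the theorem from the observation that, for every complex irreducible representation $V$ of $G$, the multiplicity of $V$ in $\Gamma_G$ is divisible by $|G|$; once this is known, $\tilde{\Gamma}_G$ is defined isotype-by-isotype.

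\emph{Uniqueness} is immediate from Krull--Schmidt: if $\tilde{\Gamma}_G$ and $\tilde{\Gamma}_G'$ are two $G$-modules with $\Gamma_G \cong \tilde{\Gamma}_G^{|G|} \cong \tilde{\Gamma}_G'^{|G|}$, then for each complex irreducible $V$ the multiplicity of $V$ in either of them must equal $\langle V, \Gamma_G\rangle_G / |G|$, which pins down $\tilde{\Gamma}_G$ up to isomorphism.

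For \emph{existence}, I would begin with a Frobenius-reciprocity computation. Since $\{\omega_j^\alpha : 0 \leq \alpha < m_j\}$ is a complete list of irreducible characters of the cyclic group $G_j$,
\[
\langle V, \Gamma_G\rangle_G \; = \; \sum_{j=1}^{r}\, \Bigl\langle V|_{G_j},\, \sum_{\alpha=1}^{m_j-1}\alpha\,\omega_j^{\alpha}\Bigr\rangle_{G_j}\; = \;\sum_{j=1}^{r}\sum_{\alpha=1}^{m_j-1}\alpha\, N^{V}_{j\: \alpha}.
\]

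The main step---and the main obstacle---is to rewrite this expression to exhibit the factor $|G|$ explicitly. The device is to compare it with the Chevalley--Weil formula of Theorem~\ref{CH-W} applied to the contragredient $V^{\ast}$. Using the identity $\langle k/m_j\rangle = k/m_j$ for $1 \leq k < m_j$ together with $N^{V^{\ast}}_{j\: k} = N^{V}_{j\: m_j-k}$, the Chevalley--Weil formula can be rearranged as
\[
a^{\ast}(V) \;=\; \dim_{\mathbb{C}} V\,(\gamma-1)\; +\; \sum_{j=1}^{r}\frac{1}{m_j}\sum_{k=1}^{m_j-1}k\, N^{V}_{j\: k}.
\]
Multiplying through by $|G|$ and using that $|G|/m_j = [G:G_j]\in\mathbb{Z}$ clears the denominators and produces
\[
|G|\bigl(a^{\ast}(V) - (\gamma-1)\dim_{\mathbb{C}}V\bigr)\;=\;\sum_{j=1}^{r}[G:G_j]\sum_{\alpha=1}^{m_j-1}\alpha\, N^{V}_{j\: \alpha},
\]
manifestly a multiple of $|G|$. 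Matching this against the Frobenius-reciprocity expression---the factor $[G:G_j]$ on the right records the $|G|/m_j$ points in the $G$-orbit of $P_j$ which all contribute the same induced summand---establishes that $\langle V,\Gamma_G\rangle_G$ is divisible by $|G|$, with quotient $a^{\ast}(V) - (\gamma-1)\dim_{\mathbb{C}}V$. This integer is non-negative, as is visible already from the unmodified Chevalley--Weil formula (the summands $N^{V}_{j\: k}\langle -k/m_j\rangle$ are all non-negative), so declaring $\langle V,\tilde{\Gamma}_G\rangle_G = a^{\ast}(V) - (\gamma-1)\dim_{\mathbb{C}}V$ for each irreducible $V$ defines a genuine (not merely virtual) $G$-module $\tilde{\Gamma}_G$ with $\Gamma_G\cong\tilde{\Gamma}_G^{|G|}$.
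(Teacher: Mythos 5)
Your strategy---compute $\langle V,\Gamma_G\rangle$ by Frobenius reciprocity, rewrite the Chevalley--Weil formula for $V^{\ast}$ as $a^{\ast}(V)=(\gamma-1)\dim_{\mathbb C}V+\sum_{j}\frac{1}{m_j}\sum_{k}kN^{V}_{j\,k}$, and read off divisibility by $\vert G\vert$---is sound in outline, and since the paper gives no proof of this theorem (it is quoted from Kani and Nakajima) a direct argument of this kind is exactly what is wanted. The problem is the ``matching'' step, which is where all the weight lies. Frobenius reciprocity applied to $\Gamma_G$ exactly as displayed (one induced summand per branch point) gives $\sum_{j}\sum_{\alpha}\alpha N^{V}_{j\,\alpha}$, whereas your Chevalley--Weil computation gives $\vert G\vert\bigl(a^{\ast}(V)-(\gamma-1)\dim_{\mathbb C}V\bigr)=\sum_{j}[G:G_j]\sum_{\alpha}\alpha N^{V}_{j\,\alpha}$. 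These differ by the factors $[G:G_j]$, and the parenthetical claim that this factor ``records the points of the orbit'' silently replaces the displayed $\Gamma_G$ by the sum over \emph{all} ramification points of $\mathcal X$. The distinction is not cosmetic: for the unweighted branch-point sum the divisibility statement is simply false. For example, for $S_3$ acting on a genus $2$ surface with branching data $(0;2,2,3,3)$ the displayed formula yields $\Gamma_G=2\chi_{\mathrm{sgn}}+8\chi_{\mathrm{st}}$ (with $\chi_{\mathrm{st}}$ the $2$-dimensional irreducible character), and neither $2$ nor $8$ is divisible by $6$. Kani's ramification module is the sum over all ramification points $P\in\mathcal X$, i.e.\ each branch point enters with weight $[G:G_j]$; only with that definition is the theorem true, and only then is it consistent with Theorem \ref{analitica-gamma} and Corollary \ref{rema} (already the dimension count $\dim\tilde\Gamma_G=\mathtt g-1-(\gamma-1)\vert G\vert$ forces the weights). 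So your proof needs the corrected definition stated up front, after which your first display acquires the factor $[G:G_j]$ and the matching becomes an identity rather than a leap.

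A second, smaller repair: your closing prescription $\langle V,\tilde\Gamma_G\rangle=a^{\ast}(V)-(\gamma-1)\dim_{\mathbb C}V$ must be restricted to non-trivial $V$, since Theorem \ref{CH-W} is stated only for those. For $V_0$ the right-hand side equals $1$ (because $a^{\ast}(V_0)=\gamma$), while $\langle V_0,\Gamma_G\rangle=0$ since $N^{V_0}_{j\,\alpha}=0$ for $1\le\alpha\le m_j-1$; so the trivial multiplicity of $\tilde\Gamma_G$ must be set to $0$ directly, in agreement with Corollary \ref{rema}. With these two corrections the argument does close up: the quotients are non-negative integers, uniqueness follows from semisimplicity of ${\mathbb C}[G]$-modules, and your computation in fact recovers Corollary \ref{rema} using only Theorem \ref{CH-W}, with no circularity.
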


Considering $ \; \tilde{\Gamma}^{\ast}_{G} \; $ the dual  $ \; G$-module of $ \; \tilde{\Gamma}_{G},  \; $ an interesting relationship between the analytical representation (character) and the representation (character) of $ \; G \; $ on $ \; \tilde{\Gamma}^{\ast}_G \; $ is given by the following result [\cite{Ka}, Theorem 2 and Corollary.]
\begin{thm}\label{analitica-gamma}
Let $ \; G \; $ be a  group acting on $ \; {\mathcal X}. \; $  If $ \; \chi_a \; $ is the character of the analytical representation and $ \; \chi_{\tilde\Gamma_G}^{\ast} \; $ is the character of the representation of $ \; G \; $ on
$ \; \tilde{\Gamma}^{\ast}_{G}, \; $ then
$$ \chi_a = \chi_0 + (\gamma -1)\chi_{reg} + \chi_{\tilde\Gamma_G}^{\ast} $$
where $ \; \chi_{reg} \; $ is the regular character and $ \; \chi_0 \; $ is the trivial character of $ \; G.$
\\
\end{thm}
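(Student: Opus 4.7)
The plan is to verify the asserted identity of virtual characters by taking inner products against an arbitrary complex irreducible $\chi_V$ of $G$ and comparing multiplicities. Since $\langle\chi_0,\chi_V\rangle=\delta_{V,V_0}$ and $\langle\chi_{reg},\chi_V\rangle=\dim_{\mathbb C}V$, the identity reduces to showing
\[
a(V) \;=\; \delta_{V,V_0} + (\gamma-1)\dim_{\mathbb C}V + [\tilde{\Gamma}^{\ast}_{G}:V]
\]
for every complex irreducible $V$ of $G$.

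On the left, Theorem \ref{CH-W} combined with the elementary identity $\left\langle -k/m_j\right\rangle=(m_j-k)/m_j$, valid for $1\le k\le m_j-1$, gives for non-trivial $V$
\[
a(V)-(\gamma-1)\dim_{\mathbb C}V \;=\; \sum_{j=1}^{r}\sum_{k=1}^{m_j-1}N_{j\,k}^{V}\,\frac{m_j-k}{m_j},
\]
while $a(V_0)=\gamma$. On the right, using $[\tilde{\Gamma}^{\ast}_{G}:V]=[\tilde{\Gamma}_G:V^{\ast}]$, the Kani-Nakajima relation $\Gamma_G=\tilde{\Gamma}_G^{|G|}$ together with Frobenius reciprocity applied to each summand $\ind_{G_j}^G\omega_j^\alpha$ of $\Gamma_G$ expresses this multiplicity in terms of the eigenvalue counts $N_{j\,\alpha}^{V^{\ast}}$ of $V^{\ast}(c_j)$. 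Since $V^{\ast}(c_j)=V(c_j)^{-1}$, these satisfy $N_{j\,\alpha}^{V^{\ast}}=N_{j\,m_j-\alpha}^{V}$; after the substitution $k=m_j-\alpha$ the resulting expression coincides with the sum obtained from Chevalley-Weil, completing the verification for non-trivial $V$.

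For the trivial representation $V_0$, every eigenvalue of $V_0(c_j)$ equals $1=\omega_j^0$, so $N_{j\,k}^{V_0}=0$ for all $k\neq 0$ and $[\tilde{\Gamma}^{\ast}_{G}:V_0]=0$; the identity then reduces to the tautology $\gamma=1+(\gamma-1)$. The main technical obstacle will be tracking the normalization factor of $|G|$ in the Kani-Nakajima relation correctly, so that the fractional weights $(m_j-k)/m_j$ align on both sides; once this dictionary is set, the verification is a mechanical reindexing $k\leftrightarrow m_j-k$ under the inverse-eigenvalue correspondence between $V$ and $V^{\ast}$.
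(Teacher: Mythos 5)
Your route is genuinely different from the paper's: the paper offers no argument for this statement at all (it is quoted from \cite{Ka}, Theorem 2 and Corollary), while you propose to deduce it from the Chevalley--Weil formula (Theorem \ref{CH-W}) by matching multiplicities, using Frobenius reciprocity and the duality $N^{V^{\ast}}_{j\:\alpha}=N^{V}_{j\:m_j-\alpha}$. That strategy can be made to work, but the one step you do not carry out---the claim that after the substitution $k=m_j-\alpha$ the expression coming from the ramification module ``coincides'' with the Chevalley--Weil sum---is exactly where the argument breaks if you use the definitions as stated in this paper. With $\Gamma_G=\sum_{j=1}^{r}\ind_{G_j}^{G}\bigl(\sum_{\alpha=1}^{m_j-1}\alpha\,\omega_j^{\alpha}\bigr)$ (one induced summand per branch point), Frobenius reciprocity and your reindexing give $\langle\chi_{\Gamma_G},\chi_{V^{\ast}}\rangle=\sum_{j}\sum_{k=1}^{m_j-1}(m_j-k)\,N^{V}_{j\:k}$, hence $\langle\chi_{\tilde{\Gamma}_G},\chi_{V^{\ast}}\rangle=\frac{1}{\vert G\vert}\sum_{j}\sum_{k}(m_j-k)\,N^{V}_{j\:k}$, whereas Theorem \ref{CH-W} gives $a(V)-(\gamma-1)\dim_{\mathbb C}(V)=\sum_{j}\sum_{k}\frac{m_j-k}{m_j}\,N^{V}_{j\:k}$. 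These differ term by term by the factor $[G:G_j]=\vert G\vert/m_j$, so they agree only when every $G_j=G$. A dimension count shows the mismatch is not an artifact of bookkeeping: the asserted identity together with Riemann--Hurwitz forces $\dim\tilde{\Gamma}_G=\frac{\vert G\vert}{2}\sum_j\bigl(1-\frac{1}{m_j}\bigr)$, while the displayed $\Gamma_G$ has dimension $\frac{\vert G\vert}{2}\sum_j(m_j-1)$, so that $\frac{1}{\vert G\vert}\dim\Gamma_G=\frac{1}{2}\sum_j(m_j-1)$ is in general too small.

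The missing ingredient is the correct normalization of the ramification module: in \cite{Ka} and \cite{J-K} the defining sum runs over all ramification points of $\mathcal{X}$, so each branch point $Q_j$ contributes $[G:G_j]$ copies of $\ind_{G_j}^{G}\bigl(\sum_{\alpha}\alpha\,\omega_j^{\alpha}\bigr)$. That extra factor $\vert G\vert/m_j$ is precisely what turns $\frac{m_j-k}{\vert G\vert}$ into $\frac{m_j-k}{m_j}$ and makes your verification close; it is also what makes the divisibility $\Gamma_G=\tilde{\Gamma}_G^{\vert G\vert}$ true in the first place. So you correctly flagged the normalization as the technical crux, but then declared it resolved without doing the computation; as written, the proposal is incomplete at its only nontrivial point, and with the paper's literal definition of $\Gamma_G$ the asserted coincidence is false. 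Repair it by using the orbit-summed definition of $\Gamma_G$ (equivalently, inserting the weights $[G:G_j]$) and then carrying out the reindexing you describe; the duality step and the trivial-representation case in your plan are fine.
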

As a  simple application of the above result we obtain a generalization of [\cite{J-K},  Proposition 5 and Corollary 6].
\begin{cor}\label{rema}  Let $ \; V \; $ be a complex irreducible  representation of $ \; G. \; $ Then
$$  \displaystyle{\langle  \chi_{\tilde{\Gamma}_G}, \chi_V \rangle} = \displaystyle{\left\{\begin{array}{ll}
\displaystyle{\langle \chi_{\tilde{\Gamma}_G}, \chi_0 \rangle}  = 0 & \mbox{if} \; \; V = V_0 \; \mbox{is the trivial representation:}\\
\\
a^{\ast}(V) + ( 1 - \gamma)\dim_{\mathbb C}(V)  & \mbox{if} \; \; V \; \mbox{is a non-trivial representation.}
\end{array}\right.}
$$
where $ \; \chi_V \; $ is the character of $ \; V \; $ and $ \; \langle\: , \:  \rangle \; $ is the usual inner product of characters.
\end{cor}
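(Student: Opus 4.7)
The plan is to apply Theorem \ref{analitica-gamma} after dualizing and then pair both sides against $\chi_V$ using standard character-theoretic identities.

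First I would observe that the regular character and the trivial character are self-dual, so taking the complex conjugate of the identity $\chi_a = \chi_0 + (\gamma-1)\chi_{reg} + \chi^{\ast}_{\tilde\Gamma_G}$ yields
$$\chi_a^{\ast} = \chi_0 + (\gamma-1)\chi_{reg} + \chi_{\tilde\Gamma_G},$$
which I rearrange as $\chi_{\tilde\Gamma_G} = \chi_a^{\ast} - \chi_0 - (\gamma-1)\chi_{reg}$. Now I would take the inner product with $\chi_V$ on both sides. Using $\langle \chi_a^{\ast},\chi_V\rangle = a^{\ast}(V)$ (the definition of $a^{\ast}(V)$), $\langle \chi_{reg},\chi_V\rangle = \dim_{\mathbb{C}}(V)$, and $\langle \chi_0,\chi_V\rangle = \delta_{V,V_0}$, I obtain
$$\langle \chi_{\tilde\Gamma_G},\chi_V\rangle = a^{\ast}(V) - \delta_{V,V_0} + (1-\gamma)\dim_{\mathbb{C}}(V).$$

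For non-trivial $V$ the Kronecker delta vanishes and the formula in the statement follows immediately. For $V=V_0$ I would invoke the trivial-representation case of Chevalley-Weil (Theorem \ref{CH-W}), which gives $a(V_0)=\gamma$, and observe that the trivial representation is self-dual, so $a^{\ast}(V_0)=\gamma$ as well. Substituting yields $\gamma - 1 + (1-\gamma)(1) = 0$, confirming the first case.

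There is no real obstacle: the argument is a direct linear-algebraic manipulation of Theorem \ref{analitica-gamma}. The only subtle point to be careful about is the duality step, namely that $\tilde\Gamma_G$ appears in the theorem through $\tilde\Gamma_G^{\ast}$, so one must dualize once to put $\tilde\Gamma_G$ itself on the right-hand side, and correspondingly $\rho_a$ becomes $\rho_a^{\ast}$, which is exactly why the multiplicity $a^{\ast}(V)$ (and not $a(V)$) is what appears in the final formula.
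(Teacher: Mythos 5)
Your argument is correct and is essentially the paper's own proof: the paper likewise dualizes the identity of Theorem \ref{analitica-gamma} to get $\chi_a^{\ast} = \chi_0 + (\gamma-1)\chi_{reg} + \chi_{\tilde\Gamma_G}$ and then reads off the multiplicities, computing $\gamma + (1-\gamma) - 1 = 0$ for the trivial representation exactly as you do. Nothing further is needed.
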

\begin{proof}
According to Theorem \ref{analitica-gamma}, observe that  if $ \; \chi_a^{\ast} \; $ is the dual character of $ \; \chi_a \; $ \; then
$$ \chi_a^{\ast} = \chi_0 + (\gamma -1)\chi_{reg} + \chi_{\tilde\Gamma_G}. $$
It follows that
\begin{enumerate}
\item the multiplicity of the trivial representation $ \; V_0 \; $ of $ \; G \; $ in $ \; \tilde{\Gamma}_G \; $ is
$$  \langle \chi_{\tilde{\Gamma}_G}, \chi_0 \rangle = \gamma + ( 1 - \gamma) - 1 = 0 \, ;$$
\item the multiplicity of any non-trivial complex irreducible representation $ \; V \; $ of $ \; G \; $ in $ \; \tilde{\Gamma}_G \; $ is 
$$  \displaystyle{\langle  \chi_{\tilde{\Gamma}_G}, \chi_V \rangle = a^{\ast}(V) + ( 1 - \gamma)\dim_{\mathbb C}(V) }.$$
\\
\end{enumerate}
\end{proof}
\section{Decomposition of $ \; L_G(D) \; $}
Let $ \; {\mathcal X} \; $ be a compact Riemann surface of genus $ \; \mathtt{g}\; $ and $ \; D \; $ a divisor on $ \; {\mathcal X}. \; $ We recall that the Riemann-Roch space associated to $ \; D \; $ is defined by
$$ {\mathcal L}(D) = \{ f \in {\mathbb C}^{\ast}({\mathcal X}) \; \; / \; \; \Div(f) \geq -D \} \cup \{ 0 \}$$
and the dimension of $ \; {\mathcal L}(D) \; $ is given by the  Riemann-Roch Theorem 
$$ \dim_{\mathbb C}({\mathcal L}(D)) = \Deg(D) - \mathtt{g}  + 1 + \dim_{\mathbb C}(\Omega(D))$$
where $ \; \Omega(D) = \{ \omega \; \; / \; \; \omega \; \mbox{is an abelian differential with } \; \Div(\omega) \geq D \} \cup \{ 0 \}.$
\vspace{2mm}\\
A divisor $ \; D \; $ is called \textit{non-special} if $ \; \dim_{\mathbb C}(\Omega(D)) = 0  $, or, equivalently, if $ \; \dim_{\mathbb C}({\mathcal L}(K - D)) = 0 \; $ for some canonical divisor $ \; K \; $ on $ \; {\mathcal X}$.
\\
\begin{rem}
As was mentioned earlier, if $ \; D \; $ is a divisor on $ \; {\mathcal  X} \; $ which is stable under the action of $ \; G, \; $ then $ \; G \; $  acts on the Riemann-Roch space $ \; {\mathcal L}(D) \; $ associated to $ \; D \; $ by the linear representation $ \; L_G(D).$
\vspace{2mm}\\
For each $P$ in ${\mathcal  X}$, consider the (basic) $ \; G$-invariant 
divisor given by
$$D_b(P) = \displaystyle{\frac{1}{m_P}\sum_{g \in G} g(P) \; \; \; \mbox{where} \; m_P = \vert G_P \vert .}
$$
Then the set of the basic divisors generates the group $ \; \DDiv({\mathcal X})^G \; $  of the  $ \; G$-invariant divisors on $ \; {\mathcal X}. \; $
\\
\end{rem}
We recall the definition of the equivariant degree,  as can be seen for example   in \cite{J-K}.
\begin{defn}
The equivariant degree is a map from $ \; \DDiv({\mathcal X})^G \; $  to the Grothendieck group $ \; R_{\mathbb K}(G) = \mathbb{Z}[G^{\ast}_{\mathbb K}], \; $
$$\DDeg_{\Eq} : \DDiv({\mathcal X})^G \to  R_{\mathbb K}(G)$$
defined by the following conditions:
\begin{enumerate}
\item $ \; \DDeg_{\Eq} \; $ is additive on the $ \; G$-invariant divisors of disjoint support;
\item If $ \; D = r_PD_b(P),  \; $ then
$$ \DDeg_{\Eq}(D) = \left\{ \begin{array}{cl}
\ind_{G_P}^G\left( \displaystyle{\sum_{k = 1}^{r_P}{\omega_P}^{-k}}\right)\; \; & \; \; \mbox{if} \; \; r_P > 0;
\\
\\
-\ind_{G_P}^G\left( \displaystyle{\sum_{k = 0}^{-(r_P+1)}{\omega_P}^{k}}\right)\; \; & \; \; \mbox{if} \; \; r_P < 0;
\\
\\
0\; \; & \; \; \mbox{if} \; \; r_P = 0.

\end{array}\right.$$
where $ \; \omega_P \; $ is the ramification character of $ \; {\mathcal X} \; $ at $ \; P.$
\\
\end{enumerate}
\end{defn}
\noindent
Now we compute the multiplicity of any complex irreducible representation of $ \; G \; $ in the decomposition of $ \; \DDeg_{\Eq}(D), \; $ for $ \; D \; $ a positive multiple  of a  basic divisor.

\begin{prop}\label{propo}
Consider $ \; D = r_pD_b(P) \; $   with $ \; r_P > 0\; $ and put   $ \; r_P = l_p + s_Pm_P,  \; \; $ where $ \;  0 \leq l_P < m_P. \; $ 
If $ \; V \; $ is a complex irreducible representation of $ \; G, \; $ 
then
 the multiplicity $ \; d(V) \; $ of  $ \; V \; $  in
the decomposition of $ \; \DDeg_{\Eq}(D) \; $    as sum of complex  irreducible  representations of $ \; G \; $   is given by
$$d(V) =  s_p\dim_{\mathbb C} (V)  + \epsilon_P\left( \dim_{\mathbb C}(V) - \sum_{k = 0}^{m_P -(l_P+1)} N_{P\: k}^{V} \right)
 $$
 where $ \epsilon_P = 0 \; $ if $ \; l_P = 0 \; \; $ and $ \epsilon_P = 1 \; \; $ if $ \; l_P \ne 0.$
\end{prop}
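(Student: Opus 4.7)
The plan is to compute $d(V) = \langle \chi_V, \chi_{\DDeg_{\Eq}(D)} \rangle_G$ by Frobenius reciprocity, after first simplifying the $G_P$-character $\sum_{k=1}^{r_P}\omega_P^{-k}$ that is being induced. Since $G_P = \langle c_P\rangle$ is cyclic of order $m_P$, its irreducible characters are $\chi_P^\alpha : c_P \mapsto \omega_P^\alpha$ for $0 \le \alpha \le m_P - 1$, and the regular character is $\chi_{reg}^{G_P} = \sum_{\alpha = 0}^{m_P - 1}\chi_P^\alpha$. In particular $\sum_{k = 1}^{m_P}\omega_P^{-k}$ traverses a complete residue system mod $m_P$ and hence equals $\chi_{reg}^{G_P}$. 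Using the Euclidean decomposition $r_P = l_P + s_P m_P$, I would group the first $s_P m_P$ terms into $s_P$ full blocks and leave a tail of length $l_P$:
$$\sum_{k=1}^{r_P}\omega_P^{-k} \;=\; s_P\,\chi_{reg}^{G_P} + \sum_{j=1}^{l_P}\chi_P^{-j}.$$

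Applying $\ind_{G_P}^G$ (which is $\mathbb{Z}$-linear) and using $\ind_{G_P}^G(\chi_{reg}^{G_P}) = \chi_{reg}^G$, this yields
$$\DDeg_{\Eq}(D) \;=\; s_P\,\chi_{reg}^G + \ind_{G_P}^G\Bigl(\sum_{j=1}^{l_P}\chi_P^{-j}\Bigr).$$
Every complex irreducible $V$ of $G$ appears in $\chi_{reg}^G$ with multiplicity $\dim_{\mathbb{C}}(V)$, producing the $s_P\dim_{\mathbb{C}}(V)$ summand in the claim. For the tail piece, Frobenius reciprocity combined with the decomposition $\chi_V|_{G_P} = \sum_\alpha N_{P,\alpha}^V\chi_P^\alpha$ and the identity $\chi_P^{-j} = \chi_P^{m_P-j}$ gives
$$\Bigl\langle \chi_V,\,\ind_{G_P}^G\Bigl(\sum_{j=1}^{l_P}\chi_P^{-j}\Bigr)\Bigr\rangle_G \;=\; \sum_{j=1}^{l_P}\langle \chi_V|_{G_P},\chi_P^{-j}\rangle_{G_P} \;=\; \sum_{j=1}^{l_P} N_{P,\,m_P-j}^V.$$

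The substitution $k = m_P - j$ rewrites this tail as $\sum_{k = m_P - l_P}^{m_P - 1} N_{P,k}^V$, and using $\sum_{k = 0}^{m_P - 1}N_{P,k}^V = \dim_{\mathbb{C}}(V)$ I can recast it as $\dim_{\mathbb{C}}(V) - \sum_{k = 0}^{m_P - (l_P + 1)} N_{P,k}^V$, which is exactly the bracketed quantity in the proposition. When $l_P = 0$ the tail is empty, which is precisely the role of the indicator $\epsilon_P$. The argument is otherwise a direct computation; the one place that requires attention is the index bookkeeping in translating $\chi_P^{-j}$ into the tabulated eigenvalue multiplicities $N_{P,\,m_P - j}^V$, which is what forces the asymmetric upper limit $m_P - (l_P + 1)$ in the final formula.
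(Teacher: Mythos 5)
Your proposal is correct and follows essentially the same route as the paper: split $\sum_{k=1}^{r_P}\omega_P^{-k}$ into $s_P$ copies of the regular character of $G_P$ plus a tail of length $l_P$, induce, and evaluate the tail via Frobenius reciprocity and complementation of the exponent set modulo $m_P$ (the paper phrases this last step as the tail plus $\sum_{k=0}^{m_P-(l_P+1)}\omega_P^{k}$ inducing to $\rho_{reg}$, you phrase it as $\sum_{j=1}^{l_P}N_{P\,m_P-j}^{V}=\dim_{\mathbb C}(V)-\sum_{k=0}^{m_P-(l_P+1)}N_{P\,k}^{V}$, which is the same bookkeeping). No gaps; your index handling, including the role of $\epsilon_P$ when $l_P=0$, matches the statement.
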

\begin{proof} Since $ \; \omega_P \; $ is a primitive $ \; m_P^{th}$-root of the unity, we have
$$ \; \rho_{reg} = \ind_{\{1\}}^G ( \chi_0 ) = \ind_{G_P}^G\left(\ind_{\{ 1 \}}^{G_P} \chi_0 \right)= \ind_{G_P}^G \left(\displaystyle{\sum_{k = 1}^{m_P}w_{P}^k}\right) = \ind_{G_P}^G \left(\displaystyle{\sum_{k = 1}^{m_P}w_{P}^{-k}}\right).$$
With this we obtain 
 $$\ind_{G_P}^G\left( \displaystyle{\sum_{k = 1}^{r_P}{\omega_P}^{-k}}\right)  = 
 \ind_{G_P}^{G}\left( \displaystyle{\sum_{k = 1}^{l_P + s_Pm_P}{\omega_P}^{-k}}\right) =\displaystyle{\left\{\begin{array}{ll}
 s_P\rho_{reg} +  \ind_{G_P}^G\left( \displaystyle{\sum_{k = 1}^{l_P}{\omega_P}^{-k}}\right) & \mbox{if} \; \; l_P \ne 0 \\
 \\
 s_P\rho_{reg} & \mbox{if} \; \; l_P = 0 \\
 \end{array}\right.}$$
Furthermore,   if $ \; l_P \ne 0 \; $ we have
 $$ \;  \ind_{G_P}^G\left( \displaystyle{\sum_{k = 1}^{l_P}{\omega_P}^{-k}}\right) +  \ind_{G_P}^G\left( \displaystyle{\sum_{k = 0}^{m_P - (l_P +1)}{\omega_P}^{k}}\right) = \rho_{reg}$$
 and in this way
$$\displaystyle{\left\langle 
 \ind_{G_P}^G\left( \displaystyle{\sum_{k = 1}^{l_P}{\omega_P}^{-k}}\right) , V \right\rangle = s_p\dim_{\mathbb C} (V)  -
 \sum_{k = 0}^{m_P -(l_P+1)} N_{P\: k}^{V}} 
$$
Hence
$$d(V) = \displaystyle{\left\langle \ind_{G_P}^G\left( \displaystyle{\sum_{k = 1}^{r_P}{\omega_P}^{-k}}\right) , V
 \right\rangle} = s_p\dim_{\mathbb C} (V)  + \epsilon_P\left( \dim_{\mathbb C}(V) - \sum_{k = 0}^{m_P -(l_P+1)} N_{P\: k}^{V} \right)
 $$
 where $ \epsilon_P = 0 \; $ if $ \; l_P = 0 \; \; $ and $ \epsilon_P = 1 \; \; $ if $ \; l_P \ne 0.$
      \end{proof}
  \noindent
  The following result can be seen in [\cite{J-K}, Lemma 4] (also see \cite{Bo}).
\begin{lem} \label{equi}Let $ \; D \; $ be a $ \; G$-invariant non-special divisor on $ \; {\mathcal X} \; $ and $ \; \chi_{\mathcal L} \; $ the character of the representation $ \; L_G(G) \; $ of $ \; G. \; $ Then
$$\chi_{\mathcal L} = (1 - \gamma)\chi_{reg} + \DDeg_{\Eq}(D) - \chi_{\tilde\Gamma_G} $$
\end{lem}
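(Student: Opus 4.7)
The plan is to derive the formula from the equivariant Riemann--Roch identity on $\mathcal{X}$ together with the Kani--Nakajima presentation of the analytic character recorded in Theorem \ref{analitica-gamma}. For any $G$-equivariant coherent sheaf $\mathcal{F}$ on $\mathcal{X}$, write $\chi(\mathcal{F}) = [H^0(\mathcal{F})] - [H^1(\mathcal{F})]$ in $R_{\mathbb{C}}(G)$. The first step is to establish
\begin{equation*}
\chi(\mathcal{O}_{\mathcal{X}}(D)) \;=\; \chi(\mathcal{O}_{\mathcal{X}}) \;+\; \DDeg_{\Eq}(D).
\end{equation*}
Because $\DDiv(\mathcal{X})^G$ is generated by the basic divisors $D_b(P)$ and both sides are additive under adding a basic divisor via the short exact sequence $0 \to \mathcal{O}(D) \to \mathcal{O}(D + D_b(P)) \to \mathcal{F} \to 0$ with $\mathcal{F}$ a skyscraper sheaf supported on the $G$-orbit of $P$, the identity reduces to computing the $G$-character of such a skyscraper sheaf orbit-by-orbit; the two cases $r_P>0$ and $r_P<0$ in the definition of $\DDeg_{\Eq}$ are exactly what this local computation produces, using that $\omega_P$ is the character of $G_P$ on the cotangent line at $P$.

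The second step is to exploit the non-special hypothesis. By Serre duality $H^1(\mathcal{O}_{\mathcal{X}}(D))^{*} \cong \mathcal{L}(K-D) = 0$, so $\chi(\mathcal{O}_{\mathcal{X}}(D)) = \chi_{\mathcal{L}}$. Serre duality applied to the structure sheaf gives $H^1(\mathcal{O}_{\mathcal{X}}) \cong H^{1,0}(\mathcal{X},\mathbb{C})^{*}$, hence $\chi(\mathcal{O}_{\mathcal{X}}) = \chi_0 - \chi_a^{*}$. Finally, since $\chi_{reg}$ and $\chi_0$ are self-dual, dualizing Theorem \ref{analitica-gamma} yields
\begin{equation*}
\chi_a^{*} \;=\; \chi_0 + (\gamma-1)\chi_{reg} + \chi_{\tilde\Gamma_G},
\end{equation*}
so $\chi(\mathcal{O}_{\mathcal{X}}) = (1-\gamma)\chi_{reg} - \chi_{\tilde\Gamma_G}$. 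Substituting back into the equivariant Riemann--Roch identity produces the stated formula.

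The principal obstacle is the bookkeeping of signs and dualities: the definition of $\DDeg_{\Eq}$ uses $\omega_P^{-k}$ rather than $\omega_P^{k}$, so the skyscraper-sheaf computation in the first step must be carried out with the correct normalization, and the passage from the character $\chi_{\tilde\Gamma_G}^{*}$ in Theorem \ref{analitica-gamma} to its dual $\chi_{\tilde\Gamma_G}$ in the final formula must be tracked precisely. Once the conventions are fixed and the local skyscraper contribution is verified on a single basic divisor, the remainder of the argument is a direct combination of the tools already assembled in the preceding sections.
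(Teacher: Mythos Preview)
Your argument is correct: the equivariant Euler-characteristic identity $\chi(\mathcal{O}_{\mathcal{X}}(D)) = \chi(\mathcal{O}_{\mathcal{X}}) + \DDeg_{\Eq}(D)$, combined with Serre duality and the dualized form of Theorem~\ref{analitica-gamma}, yields the formula exactly as you compute, and your handling of the sign and duality conventions is accurate.

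However, there is nothing to compare against: the paper does not supply its own proof of this lemma. It is stated as a citation of \cite[Lemma~4]{J-K} (see also \cite{Bo}) and used as a black box in the proof of Theorem~\ref{teo}. What you have written is essentially a reconstruction of the argument found in those references, so in that sense your approach coincides with the one the paper implicitly relies on. If anything, your write-up adds value by making explicit the passage from $\chi_{\tilde\Gamma_G}^{\ast}$ to $\chi_{\tilde\Gamma_G}$ via dualization of Theorem~\ref{analitica-gamma}, a step the paper only invokes later in the proof of Corollary~\ref{rema}.
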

\noindent
Now we are able to prove our main result.

   \begin{thm}\label{teo}   Let $ \; G \; $ be a  group acting on $ \; {\mathcal X} \; $ and $ \; \displaystyle{ D = \sum_{P \in  {\mathcal X} }r_PD_b(P) }\; $ be an effective non-special divisor on $ \; {\mathcal X} . \; $

For each $ \; P \in  {\mathcal X} \; $ write $ \; r_P = l_p + s_Pm_P  \; $ with $ \;  0 \leq l_P < m_P. \; $  If $ \;  V \; $ is a non-trivial complex irreducible representation of $ \; G, \; $  then the multiplicity $ m(V) $ of $V$ in the decomposition of  $ \; L_G(D) \; $ as sum of irreducible complex representations of $G$ is given by
  $$\displaystyle{m(V) =   \sum_{P \in {\mathcal X} } s_P\dim_{\mathbb C}(V) + \sum_{P \in {\mathcal X} } \epsilon_P\left( \dim_{\mathbb C}(V) - \sum_{k = 0}^{m_P -(l_P+1)} N_{P\: k}^{V} \right)  - a^{\ast}(V) }$$
  where $ \epsilon_P = 0 \; $ if $ \; l_P = 0 \; \; $ and $ \epsilon_P = 1 \; \; $ if $ \; l_P \ne 0. \; $
  
  Furthermore, for the trivial representation $ \; V_0 \; $ we have 
  $$m(V_0) = 1 - \gamma +  \sum_{P \in {\mathcal X} } s_P.$$
  \end{thm}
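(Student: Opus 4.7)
The plan is to read off $m(V) = \langle \chi_{\mathcal{L}}, \chi_V \rangle$ directly from Lemma \ref{equi}, which expresses
$$\chi_{\mathcal{L}} = (1-\gamma)\chi_{reg} + \DDeg_{\Eq}(D) - \chi_{\tilde{\Gamma}_G}$$
as a virtual character, by computing the inner product of each of the three summands on the right with $\chi_V$ separately and then combining the results. All three terms already have known inner products, so the proof is essentially an assembly.

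First I would recall that $\langle \chi_{reg}, \chi_V \rangle = \dim_{\mathbb{C}}(V)$ in the non-trivial case and $\langle \chi_{reg}, \chi_0 \rangle = 1$ for $V_0$, giving the contribution of the first summand. Next I would use the additivity of $\DDeg_{\Eq}$ on $G$-invariant divisors of disjoint support to decompose
$$\DDeg_{\Eq}(D) = \sum_{P \in \mathcal{X}/G} \DDeg_{\Eq}(r_P D_b(P)),$$
and apply Proposition \ref{propo} termwise; since $D$ is effective, every $r_P \geq 0$, so only the positive branch of the definition is needed. This yields
$$\langle \DDeg_{\Eq}(D), \chi_V \rangle = \sum_{P} \left( s_P \dim_{\mathbb{C}}(V) + \epsilon_P \Big( \dim_{\mathbb{C}}(V) - \sum_{k=0}^{m_P-(l_P+1)} N_{P\:k}^{V} \Big) \right).$$
For the third summand Corollary \ref{rema} supplies $\langle \chi_{\tilde{\Gamma}_G}, \chi_V \rangle = a^{\ast}(V) + (1-\gamma)\dim_{\mathbb{C}}(V)$ for non-trivial $V$, and $\langle \chi_{\tilde{\Gamma}_G}, \chi_0 \rangle = 0$.

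Adding these three contributions with the correct signs, in the non-trivial case the two independent $(1-\gamma)\dim_{\mathbb{C}}(V)$ pieces (one from $\chi_{reg}$, one from $\chi_{\tilde{\Gamma}_G}$) cancel and the remaining terms collapse precisely to the stated formula for $m(V)$. For the trivial representation I would separately observe that Proposition \ref{propo} specialises cleanly: since $V_0(c_P) = 1 = \omega_P^0$, we have $N_{P\:0}^{V_0} = 1$ and $N_{P\:k}^{V_0} = 0$ for $k \neq 0$, so the bracketed term vanishes and $d(V_0) = s_P$ at every $P$, regardless of whether $\epsilon_P$ is $0$ or $1$. Combining this with $(1-\gamma)\langle \chi_{reg}, \chi_0\rangle = 1 - \gamma$ and $\langle \chi_{\tilde{\Gamma}_G}, \chi_0 \rangle = 0$ recovers $m(V_0) = 1 - \gamma + \sum_P s_P$.

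The computation is essentially linear bookkeeping, and the only real subtlety is keeping track of which $(1-\gamma)\dim_{\mathbb{C}}(V)$ contributions must cancel so that the surviving expression matches the theorem statement. The trivial-representation case cannot be obtained by simple specialisation because both Corollary \ref{rema} and the $\epsilon_P$-dependent term in Proposition \ref{propo} behave differently there; handling it correctly is the one spot where care is required.
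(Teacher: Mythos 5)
Your proposal is correct and follows essentially the same route as the paper: Lemma \ref{equi} combined with the inner products from Corollary \ref{rema}, additivity of $\DDeg_{\Eq}$, and Proposition \ref{propo}, with the $(1-\gamma)\dim_{\mathbb C}(V)$ terms cancelling exactly as in the paper's computation. Your treatment of the trivial representation (via $N_{P\,0}^{V_0}=1$, $N_{P\,k}^{V_0}=0$ for $k\neq 0$, so the bracketed term vanishes) is the same observation the paper makes, stated a bit more carefully.
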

  \begin{proof} According to  Lemma \ref{equi} we have
  $$\chi_{\mathcal L} = (1 - \gamma)\chi_{reg} + \DDeg_{\Eq}(D) - \chi_{\tilde\Gamma_G}. $$
  Hence
  $$m(V) = \langle \chi_{\mathcal L}, \chi_V \rangle = (1 - \gamma)\dim_{\mathbb C}(V) + \langle \DDeg_{\Eq}(D), \chi_V \rangle - \langle \chi_{\tilde\Gamma_G},  \chi_V \rangle.$$\\
  Now, applying Corollary \ref{rema} we have
  $$  \displaystyle{\langle  \chi_{\tilde{\Gamma}_G}, \chi_V \rangle} = \displaystyle{\left\{\begin{array}{ll}
\displaystyle{\langle \chi_{\tilde{\Gamma}_G}, \chi_0 \rangle}  = 0 & \mbox{if} \; \; V = V_0 \; \mbox{is the trivial representation;}\\
\\
a^{\ast}(V) + ( 1 - \gamma)\dim_{\mathbb C}(V)  & \mbox{if} \; \; V \; \mbox{is a non-trivial representation.}
\end{array}\right.}
$$
Since $ \;  \DDeg_{\Eq} \; $ is additive on the $ \; G$-invariant divisors of disjoint support, we can apply the Proposition \ref{propo} to each $ \; r_pD_b(P), \; \; P \in  {\mathcal X}. \; $ In this way, we have
 $$\displaystyle{\left\langle \DDeg_{\Eq}(D) , \chi_V
 \right\rangle} = \sum_{P \in {\mathcal X} } s_p\dim_{\mathbb C} (V)  + \sum_{P \in {\mathcal X} } \epsilon_P\left( \dim_{\mathbb C}(V) - \sum_{k = 0}^{m_P -(l_P+1)} N_{P\: k}^{V} \right)
 $$
 with $ \epsilon_P = 0 \; $ if $ \; l_P = 0 \; \; $ and $ \epsilon_P = 1 \; \; $ if $ \; l_P \ne 0. \; $ 
 
 To complete the proof, note that if $ \; l_P \ne 1, \; $ then
 $$\dim_{\mathbb C}(V_0) - \sum_{k = 0}^{m_P -(l_P+1)} N_{P\: k}^{V_0} = 0$$
 and the result follows.
  \end{proof}

\begin{rem}\label{rema2} Let $ \; D = \pi^{\ast}(D_0) \; $ be a  divisor on  $ \; {\mathcal X} \; $ which is a pullback of an effective divisor $ \; D_0 = \displaystyle{\sum_{Q \in   {\mathcal X}_G} \alpha_Q Q}\; $ on  $ \; {\mathcal X}_G.\; $
\\
Then $$D = \pi^{\ast}(D_0) = \displaystyle{\sum_{Q \in   {\mathcal X}_G} \alpha_Q \sum_{P \in  \pi^{-1}(Q)} m_PP = \sum_{Q \in   {\mathcal X}_G} \alpha_Q \sum_{g \in  G}g(P) = \sum_{Q \in   {\mathcal X}_G} \alpha_Q m_PD_b(P).
}$$ fixing $ \; P \in \pi^{-1}(Q).$
Hence, with the notation of the Theorem \ref{teo}, we have $ \; l_P = 0 \; \, $ and $ \; \; s_P = \alpha_Q, \; $ for all $ \; Q \in  {\mathcal X}_G. \; $  \\
\end{rem}
Our last result of this section is a generalization of [\cite{J-K}, Theorems 1 and 2].
\begin{cor} Let $ \; D = \pi^{\ast}(D_0) \; $ be a  non-special divisor on  $ \; {\mathcal X} \; $ which is a pullback of an effective divisor $ \; D_0 \; $ on  $ \; {\mathcal X}_G.\; $
 If $ \;  V \; $ is a non-trivial complex irreducible representation of $ \; G, \; $  then the multiplicity $ m(V) $ of $V$ in the decomposition of  $ \; L_G(D) \; $ as sum of irreducible complex representations of $G$ is given by
  $$\displaystyle{m(V) =  \Deg(D_0)\dim_{\mathbb C}(V)  - a^{\ast}(V) }.$$
 Furthermore, for the trivial representation $ \; V_0 \; $ we have  $ \; m(V_0) = \Deg(D_0) + 1 - \gamma .$
 \vspace{2mm}\\
 In particular, if $ \; V \; $ is a non-trivial absolutely  irreducible representation of $ \; G, \; $ then
 $$m(V) = \dim_{\mathbb Q}(V)(\Deg(D_0) + 1 - \gamma)  - \displaystyle{\frac{1}{2}\left( \sum_{j=1}^{r}\dim_{\mathbb Q}(V) - \dim_{\mathbb Q}(V^{G_j})\right)}. $$
\end{cor}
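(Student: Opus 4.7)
The strategy is to reduce everything to Theorem \ref{teo} using the structural information about pullback divisors recorded in Remark \ref{rema2}, and then to invoke Corollary \ref{corolario} to handle the absolutely irreducible case. Since the heavy lifting is already contained in these earlier statements, the proof should be essentially a direct calculation.

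First, I would use Remark \ref{rema2} to rewrite the pullback divisor as $D = \sum_{Q \in \mathcal{X}_G} \alpha_Q m_P D_b(P)$, with a chosen $P \in \pi^{-1}(Q)$ for each $Q$ in the support of $D_0$. This immediately identifies, in the notation $r_P = l_P + s_P m_P$ with $0 \le l_P < m_P$ of Theorem \ref{teo}, the values $l_P = 0$ and $s_P = \alpha_Q$ for every $P$ with $\pi(P) = Q$.

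Next, because $l_P = 0$ for every $P$ appearing in $D$, the corresponding indicator $\epsilon_P$ vanishes, so the middle sum in the formula of Theorem \ref{teo} disappears entirely. What remains is $m(V) = \bigl(\sum_{P} s_P\bigr)\dim_{\mathbb C}(V) - a^{\ast}(V)$, where the sum runs over one representative per $G$-orbit in the support of $D$. That outer sum collapses to $\sum_{Q \in \mathcal{X}_G}\alpha_Q = \Deg(D_0)$, giving the stated formula for non-trivial $V$. The trivial-representation formula $m(V_0) = 1 - \gamma + \Deg(D_0)$ follows in the same manner from the second clause of Theorem \ref{teo}.

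Finally, for a non-trivial absolutely irreducible $V$, I would substitute the expression for $a^{\ast}(V)$ supplied by Corollary \ref{corolario}, namely $a^{\ast}(V) = (\gamma-1)\dim_{\mathbb Q}(V) + \tfrac{1}{2}\bigl(\sum_{j=1}^{r}\dim_{\mathbb Q}(V) - \dim_{\mathbb Q}(V^{G_j})\bigr)$, into the formula just obtained, using $\dim_{\mathbb C}(V) = \dim_{\mathbb Q}(V)$ and rearranging. There is no real obstacle beyond clean bookkeeping: the only subtlety is matching the orbit-representative convention used in writing $D$ as a sum of basic divisors with the outer sum appearing in Theorem \ref{teo}, so that the coefficients combine correctly into $\Deg(D_0)$.
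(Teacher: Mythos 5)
Your proposal is correct and follows essentially the same route as the paper: rewrite $D=\pi^{\ast}(D_0)$ via Remark \ref{rema2} so that $l_P=0$ and $s_P=\alpha_Q$, apply Theorem \ref{teo} (the $\epsilon_P$ terms vanish and $\sum_P s_P=\Deg(D_0)$), and then substitute the expression for $a^{\ast}(V)$ from Corollary \ref{corolario} in the absolutely irreducible case.
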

\begin{proof} Let  $ \; D_0 = \displaystyle{\sum_{Q \in   {\mathcal X}_G} \alpha_Q Q}.\; $ 
According to Remark \ref{rema2} we have 
$$D = \pi^{\ast}(D_0) = \displaystyle{\sum_{Q \in   {\mathcal X}_G} \alpha_Q \sum_{P \in  \pi^{-1}(Q)} m_PP = \sum_{Q \in   {\mathcal X}_G} \alpha_Q \sum_{g \in  G}g(P) = \sum_{Q \in   {\mathcal X}_G} \alpha_Q m_PD_b(P).
}$$ fixing $ \; P \in \pi^{-1}(Q) . $   
\\
Now applying Theorem \ref{teo} with  $ \; l_P = 0 \; \, $ and $ \; \; s_P = \alpha_Q, \; $   we have 
$$\displaystyle{m(V) =   \sum_{P \in {\mathcal X} } s_P\dim_{\mathbb C}(V) - a^{\ast}(V)  = \deg(D_0)\dim_{\mathbb C}(V) - a^{\ast}(V) }
$$
and $ \;  \; m(V_0) = \Deg(D_0) + 1 - \gamma .$
\vspace{2mm}\\
Finally,  according to Corollary \ref{corolario} for  $ \;  V \; $  a non-trivial absolutely  irreducible representation 
$$ a^{\ast}(V) = (\gamma - 1)\dim_{\mathbb Q}(V) + \displaystyle{\frac{1}{2}\left( \sum_{j=1}^{r}\dim_{\mathbb Q}(V) - \dim_{\mathbb Q}(V^{G_j})\right)}. $$
Then in this case $$m(V) = \dim_{\mathbb Q}(V)(\Deg(D_0) + 1 - \gamma)  - \displaystyle{\frac{1}{2}\left( \sum_{j=1}^{r}\dim_{\mathbb Q}(V) - \dim_{\mathbb Q}(V^{G_j})\right)}. $$

\end{proof}
\section{Examples}
In this section, 
to apply our results we give some examples of group actions on Riemann-Roch spaces for divisors on  well known families of curves. 

We first recall a well known fact:
\begin{rem} Let $ \; D \; $ be a divisor on $ \;  {\mathcal X}.  \; $ If $ \; \deg(D) > 2(\mathtt{g} -1), \; $ then $ \; D \; $ is non-special.\\
\end{rem}

\begin{exa} See [\cite{J-K}, Example 4]. Let $ \;  {\mathcal X} \; $ be the Klein quartic of genus $ \; \mathtt{g} = 3 \; $
$$\{ [X : Y :Z] \in {\mathbb C}{\mathbb P}^2 \; / \; X^3Y + Y^3Z + Z^3X = 0 \} .$$
Consider the automorphisms of  ${\mathcal X}$ given by $$ \tau[X : Y :Z] = [\eta X : \eta^4 Y : \eta^2 Z] \; \; \mbox{and} \; \; \sigma[X : Y : Z]= [Y : Z : X]$$
where $ \; \eta \; $ is a primitive seventh root of the unity.  The group $ \; G = \langle \tau , \sigma \rangle \cong \langle \tau \rangle \rtimes \langle \sigma \rangle \; $ has order $ \; 21 \; $ and has characters table given by
$$\begin{array}{|c|c|c|c|}
\hline
& \mbox{degree} & \tau & \sigma\\
\hline
\chi_0 & 1 & 1 & 1\\
\hline
\chi_1 & 1 & 1 & \zeta\\
\hline
\chi_2 & 1 & 1 & \zeta^2\\
\hline
\chi_3 & 3 & \eta + \eta^2 + \eta^4 & 0 \\
\hline
\chi_4 & 3 & \eta^3 + \eta^5 + \eta^6 & 0 \\
\hline
\end{array}
$$
with$ \; \zeta \; $ a primitive  cube root of the unity. The  complex irreducible  representations associated to $ \; \chi_3 \; \; $ and $ \; \; \chi_4 \; $ are respectively 
$$V_3(\tau) = \left( \begin{array}{lll}
\eta & 0 & 0 \\
0 & \eta^2 & 0\\
0 & 0 & \eta^4
\end{array} \right) \; \; \; ; \; \; \; V_3(\sigma) = \left( \begin{array}{lll}
0 & 0 & 1 \\
1 & 0 & 0\\
0 & 1 & 0
\end{array} \right) \equiv \left( \begin{array}{lll}
1 & 0 & 0 \\
0 & \zeta & 0\\
0 & 0 & \zeta^2
\end{array} \right)
$$ 
$$V_4(\tau) = \left( \begin{array}{lll}
\eta^3 & 0 & 0 \\
0 & \eta^5 & 0\\
0 & 0 & \eta^6
\end{array} \right) \; \; \; ; \; \; \; V_4(\sigma) = \left( \begin{array}{lll}
0 & 0 & 1 \\
1 & 0 & 0\\
0 & 1 & 0
\end{array} \right) \equiv \left( \begin{array}{lll}
1 & 0 & 0 \\
0 & \zeta & 0\\
0 & 0 & \zeta^2
\end{array} \right).
$$ 
The point $ \; P = [1 : \zeta : \zeta^3] \; $ is fixed by $ \; H_P = \langle \sigma \rangle. \; $ \\
Consider the non-special divisor $ \; D = D_b(P) = \displaystyle{\frac{1}{3}\sum_{g \in G}g(P)} \; $ of degree $ \; 7. \; $ \\
Then $ \; r_P = 1, \; l_P = 1 \; \; $ and $ \; s_P = 0. \; $ Also
$$\dim_{\mathbb C}({\mathcal L}(D)) = \Deg(D) - \mathtt{g} + 1 = 5.$$
Since $ \; m_P = 3 \; \; $ and $ \; l_P = 1, \; $ we have $ \; m_P - (l_P + 1 ) = 1 .\; $ In this way  for each $ \; V_j \; $ we obtain
$$\displaystyle{\sum_{k=0}^{1}N_{P \: k}^{V_1} = 1 \; \; ; \; \; \sum_{k=0}^{1}N_{P \: k}^{V_2} = 0 \; \; ; \; \; \sum_{k=0}^{1}N_{P \: k}^{V_3} = 2 \; \; ; \; \; \sum_{k=0}^{1}N_{P \: k}^{V_4} = 2}
$$
The analytic representation of $ \; G \; $ associated to the action on $ \;  {\mathcal X} \; $ is $ \; \rho_a = V_3. \; $ With this $ \; a^{\ast}(V_1) = 0, \; a^{\ast}(V_2) = 0, \; a^{\ast}(V_3) = 0 \; $ and $ \; a^{\ast}(V_4) = 1.$
\\
Applying  Theorem \ref{teo} we have
$$m(V_0) = 1, \; m(V_1) = 0, \; m(V_2) = 1, \; m(V_3) = 1 \; \; \mbox{and} \; m(V_4) = 0.$$
Finally we conclude
$$L_G(D) \cong V_0 \oplus V_2 \oplus V_3 \, .$$
\end{exa}

\medskip

\begin{exa} Let $ \; p \geq 5 \; $ be a prime number. \\
Consider  the Fermat curve   
$$ \;  {\mathcal X} \;   = \{ [X : Y :Z] \in {\mathbb C}{\mathbb P}^2 \; / \; X^p + Y^p + Z^p = 0 \} $$
of genus $ \; \mathtt{g} = \displaystyle{\frac{(p-1)(p-2)}{2}} \; $ and the automorphism of  $ \;  {\mathcal X} \; $ defined  by $ \; \sigma [X:Y:Z] = [\omega X:Y:Z] \; $ where $ \; \omega $ is a primitive $ \; p^{th}$-root of the unity. 

For
$ \; G = \langle \sigma \rangle \; $ the branching data is $ \; (0; \underbrace{p,p, \dots, p}_{p}) \; $ and a generating vector is $ \; (\underbrace{\sigma, \sigma, \dots, \sigma}_{p}). \; $ The non-trivial representations $ \; \{V_1, V_2, \dots , V_{p-1} \} \; $  of $ \; G \; $ are  defined by $ \; \sigma \rightarrow w^{i} \; \; $ with $ \; 1 \leq i \leq p-1. \; $ 
\vspace{2mm}\\
Let $ \; \eta \; $ be a primitive $ \; 2p^{th}$-root of the unity and $ \; P = [0 : \eta : 1] \in {\mathcal X} . \; $ Then $ \; P \; $ is a fixed point by $ \; G. \; \; $
Consider $ \; D = (p(p-3) + 1)D_b(P). \; $  Then
$$\dim_{\mathbb C}({\mathcal L}(D)) = \Deg(D) -  \mathtt{g}  = 1 = \displaystyle{\frac{(p-1)(p-2)}{2}}.$$
Is it not difficult to prove that the set
$$\beta = \displaystyle{\left\{  F_{a\: b}= \frac{X^aZ^b}{(Y - \eta  Z)^{a+b}} \; \; /  \; \; 0 \leq a \leq p-3, \; 0 \leq b \leq p-3, \; \; a+b \leq p-3 \right\}}$$
is a basis of $ \; {\mathcal L}(D). \; $ For $ \; F_{a\: b} \in \beta \; $ the action of $ \; G \; $ is given by $ \; \sigma (  F_{a\: b}) =  \displaystyle{\frac{\omega^aX^aZ^b}{(Y - \omega  Z)^{a+b}}} .$
\\
Hence
$$L_G(D) \cong (p-2)V_0 \oplus (p-3)V_1 \oplus \cdots \oplus 2V_{p-4} \oplus V_{p-3}.$$\\

Now applying  Theorem \ref{CH-W} the analytic representation of $ \; G \; $ is
$$\rho_a \cong (p-2)V_1 \oplus (p-3)V_2 \oplus \dots \oplus 2V_{p-3} \oplus V_{p-2}. $$
In this way
$$\rho_a^{\ast} \cong V_2 \oplus 2V_3 \oplus \cdots \oplus (p-3)V_{p-2} \oplus (p-2)V_{p-1} $$
We will apply  Theorem \ref{teo} with  $ \; s_P = p - 3 \; \; $ and $ \; l_P = 1. \; $ We have 
$$m(V_0) = 1 + s_P = p-2 $$
$$m(V_j) = -a^{\ast}(V_j) + s_P + \dim_{\mathbb C}(V_j) - \displaystyle{\sum_{k=0}^{p-2}N_{P \: k}^{V_j}}$$
Finally
$$\displaystyle{m(V_j) =  p-2 - a^{\ast}(V_j)  -\sum_{k=0}^{p-2}N_{P \: k}^{V_j}  = \left\{\begin{array}{cl} p - 2 - j & \mbox{for} \; 1 \leq j \leq p - 3 \\ 
\\
0 &   \mbox{for} \; p - 2  \leq j \leq p - 1 \end{array} \right.} $$
\end{exa}

\bibliographystyle{amsplain}

\begin{thebibliography}{10}
\bibitem{Bo}  N. Borne, \emph{Une formulae de Riemann-Roch equivariante pour des courbes}, Can. J. Math. \textbf{bf 55}, (2003), 693-710.
\bibitem{Brou}
A. Broughton, \emph{Classifying finite group actions on surfaces of low
  genus}, J. Pure Appl. Algebra \textbf{69} (1991), no.~3, 233--270.

\bibitem{br}
      A. Broughton:
      \textit{The homology and higher representations of the automorphism group of a Riemman surface}.
      Trans. AMS, 300 (1987), 153-158.


\bibitem{cw}
      C. Chevalley, A. Weil:
      \textit{\"Uber das Verhalten der Integrale erster Gattung bei Automorphisman des Funktionenk\"orpers}.
      Hamb. Abh. 10 (1934
      ), 358-361.
\bibitem{E-L} G. Ellingsrud and K. L$\phi$nsted , \emph{ An equivariant Lefschetz formula for finite reductive groups}, Math. Ann. \textbf{251}, (1980), 253-261.
\bibitem{h} A. Hurwitz, \emph{Uber algebraische Gebielde mit eindeutigen Transformationen in sich}, Math. Ann. \textbf{41}, (1893), 403-441.
\bibitem{J-K} D. Joyner and A. Ksir, \emph{Decomposing representations of finite groups on Riemann-Roch spaces}, Proc. AMS, \textbf{135}, no. 11, 3465-3476, (2007).
\bibitem{Ka} E. Kani, \emph{The Galois-module structure of the space of holomorphic differential forms on a curve}, J. reine angew. Math. \textbf{367}, (1986), 187-206.
\bibitem{Ko} B. K\"och, \emph{Computing the equivariant Euler characteristic of Zariski and \'etale sheaves on curves},Homology Homotopy Appl. \textbf{7}, (2005), 83-98. 
\bibitem{Na} S. Nakajima, \emph{Galois module structure of cohomology groups for tamely ramified coverings of algebraic varieties}, J. Number Theory \textbf{2}, (1986), 115-123.



\bibitem{Rojas}
A. M. Rojas, \emph{Group actions on {J}acobian varieties}, Rev. Mat.
  Iberoam. \textbf{23} (2007), no.~2, 397--420.

\end{thebibliography}

\end{document}